\numberwithin{equation}{section}
\DeclareMathOperator{\td}{d\mspace{-2mu}}
\theoremstyle{plain}
\newtheorem{thm}{Theorem}[section]
\newtheorem{lem}{Lemma}[section]
\newtheorem{prop}{Proposition}[section]
\theoremstyle{remark}
\theoremstyle{definition}
\newtheorem{dfn}{Definition}[section]
\begin{document}

\title{The geometric mean is a Bernstein function}

\author[F. Qi]{Feng Qi}
\address[F. Qi]{Department of Mathematics, School of Science, Tianjin Polytechnic University, Tianjin City, 300387, China}
\email{\href{mailto: F. Qi <qifeng618@gmail.com>}{qifeng618@gmail.com}, \href{mailto: F. Qi <qifeng618@hotmail.com>}{qifeng618@hotmail.com}, \href{mailto: F. Qi <qifeng618@qq.com>}{qifeng618@qq.com}}
\urladdr{\url{http://qifeng618.wordpress.com}}

\author[X.-J. Zhang]{Xiao-Jing Zhang}
\address[X.-J. Zhang]{Department of Mathematics, School of Science, Tianjin Polytechnic University, Tianjin City, 300387, China}
\email{\href{mailto: X.-J. Zhang <xiao.jing.zhang@qq.com>}{xiao.jing.zhang@qq.com}}

\author[W.-H. Li]{Wen-Hui Li}
\address[W.-H. Li]{Department of Mathematics, School of Science, Tianjin Polytechnic University, Tianjin City, 300387, China}
\email{\href{mailto: W.-H. Li <wen.hui.li@foxmail.com>}{wen.hui.li@foxmail.com}}

\begin{abstract}
In the paper, the authors establish, by using Cauchy integral formula in the theory of complex functions, an integral representation for the geometric mean of $n$ positive numbers. From this integral representation, the geometric mean is proved to be a Bernstein function and a new proof of the well known AG inequality is provided.
\end{abstract}

\keywords{Integral representation; Geometric mean; Cauchy integral formula; Bernstein function; AG inequality; new proof; Completely monotonic function; Logarithmically completely monotonic function; Stieltjes function}

\subjclass[2010]{Primary 26E60, 30E20; Secondary 26A48, 44A20}

\thanks{This paper was typeset using \AmS-\LaTeX}

\maketitle

\section{Introduction}

We recall some notions and definitions.

\begin{dfn}[\cite{mpf-1993, widder}]
A function $f$ is said to be completely monotonic on an interval $I$ if $f$ has derivatives of all orders on $I$ and
\begin{equation}
(-1)^{n}f^{(n)}(t)\ge0
\end{equation}
for $x \in I$ and $n \ge0$.
\end{dfn}

The class of completely monotonic functions on $(0,\infty)$ is characterized by the famous Hausdorff-Bernstein-Widder Theorem below.

\begin{prop}[{\cite[p.~161, Theorem~12b]{widder}}]\label{Bernstein-Widder-Theorem-12b}
A necessary and sufficient condition that $f(x)$ should be completely monotonic for $0<x<\infty$ is that
\begin{equation} \label{berstein-1}
f(x)=\int_0^\infty e^{-xt}\td\alpha(t),
\end{equation}
where $\alpha(t)$ is non-decreasing and the integral converges for $0<x<\infty$.
\end{prop}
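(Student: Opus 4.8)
The plan is to prove the two implications separately: sufficiency---that any such Laplace--Stieltjes integral is completely monotonic---is routine, whereas necessity---that every completely monotonic $f$ admits the representation \eqref{berstein-1}---carries all of the weight.

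For sufficiency I would assume $f(x)=\int_0^\infty e^{-xt}\,\td\alpha(t)$ with $\alpha$ non-decreasing and argue that one may differentiate under the integral sign any number of times. On a compact subinterval $[a,b]\subset(0,\infty)$ the factor $t^n e^{-xt}$ is dominated by $C_n e^{-(a/2)t}$, which is $\td\alpha$-integrable because the integral already converges at $a/2<a$; hence $f^{(n)}(x)=\int_0^\infty(-t)^n e^{-xt}\,\td\alpha(t)$, and therefore $(-1)^n f^{(n)}(x)=\int_0^\infty t^n e^{-xt}\,\td\alpha(t)\ge 0$ since the integrand is non-negative and $\alpha$ is non-decreasing. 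This is exactly complete monotonicity.

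For necessity I would first record the finite-difference reformulation. With $\Delta_h f(x)=f(x+h)-f(x)$, the mean-value theorem for finite differences gives $\Delta_h^k f(x)=h^k f^{(k)}(\xi)$ for some $\xi\in(x,x+kh)$, so complete monotonicity yields $(-1)^k\Delta_h^k f(x)\ge 0$ for all $k\ge0$, $h>0$, $x>0$. Fixing a step $h>0$ and setting $c_n=f(nh)$, this says precisely that $\{c_n\}$ is a completely monotone sequence, whence Hausdorff's moment theorem furnishes a non-negative bounded measure $\nu_h$ on $[0,1]$ with $c_n=\int_0^1 u^n\,\td\nu_h(u)$. The substitution $u=e^{-hs}$ then converts this into $f(nh)=\int_0^\infty e^{-nhs}\,\td\beta_h(s)$ for $n\ge1$, so $f$ coincides with a genuine Laplace--Stieltjes transform on the lattice $\{nh:n\ge1\}$.

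The final step is to let $h\to0$, along which the lattices exhaust a dense subset of $(0,\infty)$. The masses $\beta_h([0,\infty))=c_0\le f(0{+})$ are uniformly bounded, so Helly's selection theorem extracts a subsequence of $\beta_h$ converging weakly to a non-decreasing $\alpha$, and the Helly--Bray lemma passes the identity to the limit, giving $f(x)=\int_0^\infty e^{-xt}\,\td\alpha(t)$ first on a dense set of $x$ and then for all $x>0$ by continuity. I expect this limiting argument to be the main obstacle: one must control $\beta_h$ uniformly near $s=0$ and $s=\infty$ to rule out escape of mass, so that the weak limit is honestly the Laplace transform of $\alpha$ at \emph{every} $x$ rather than only along a shrinking lattice. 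An alternative that avoids the moment problem is the Post--Widder inversion: set $\alpha_k(t)=\int_0^t\frac{(-1)^k}{k!}\bigl(\tfrac{k}{u}\bigr)^{k+1}f^{(k)}\bigl(\tfrac{k}{u}\bigr)\,\td u$, which is non-decreasing exactly because $(-1)^k f^{(k)}\ge0$ and uniformly bounded by $f(0{+})$, and then show via an approximate-identity analysis of this kernel that $\int_0^\infty e^{-xt}\,\td\alpha_k(t)\to f(x)$ before invoking Helly; there the asymptotics of the inversion kernel are the technical heart.
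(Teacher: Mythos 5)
The paper does not prove this proposition at all: it is quoted verbatim as a known result and attributed to Widder's \emph{The Laplace Transform} (p.~161, Theorem~12b), so there is no in-paper argument to compare against. Your outline is, in fact, essentially Widder's own route to the theorem: reduce complete monotonicity to the sign condition on iterated finite differences, recognize $\{f(nh)\}$ as a completely monotone sequence, invoke Hausdorff's moment theorem, substitute $u=e^{-hs}$, and pass to the limit $h\to0$ by Helly selection; the Post--Widder inversion you mention is the other classical path. The sufficiency half is fine as written.

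There is one genuine inaccuracy in the necessity half. You assert that the total masses $\beta_h([0,\infty))=c_0\le f(0{+})$ are uniformly bounded, but the theorem as stated allows $f(0{+})=\infty$ (e.g.\ $f(x)=1/x$ is completely monotonic on $(0,\infty)$ and has an unbounded representing measure), and also $c_0=f(0)$ is not even defined since $f$ lives on the open half-line. So Helly's selection theorem cannot be applied to the $\beta_h$ globally. The standard repair is either (i) to work with $f(x+\delta)$ for fixed $\delta>0$, which \emph{is} bounded near $0$, obtain its representation, and then remove $\delta$ using uniqueness of Laplace--Stieltjes transforms (the measures $e^{\delta t}\td\alpha_\delta(t)$ must all agree); or (ii) to replace the global mass bound by the local bound $\int_0^T\td\beta_h(t)\le e^{x_0T}f(x_0-h)$ for fixed $x_0$, use Helly in the vague sense on compacts, and control the tail at $t=\infty$ by $\int_T^\infty e^{-xt}\td\beta_h(t)\le e^{-(x-x_0)T}\sup_h\int_0^\infty e^{-x_0t}\td\beta_h(t)$. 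You correctly flag the escape-of-mass issue as the technical heart, but as written the bound you offer to resolve it is not available; with either repair above the argument closes and agrees with the classical proof.
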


\begin{dfn}[\cite{compmon2, minus-one}]
A function $f$ is said to be logarithmically completely monotonic on an interval $I$ if its logarithm $\ln f$ satisfies
\begin{equation}
(-1)^k[\ln f(t)]^{(k)}\ge0
\end{equation}
for $k\in\mathbb{N}$ on $I$.
\end{dfn}

It has been proved in~\cite{CBerg, absolute-mon-simp.tex, compmon2, minus-one} that a logarithmically completely monotonic function on an interval $I$ must be completely monotonic on $I$.

\begin{dfn}[\cite{Schilling-Song-Vondracek-2010, widder}]
A function $f:I\subseteq(-\infty,\infty)\to[0,\infty)$ is called a Bernstein function on $I$ if $f(t)$ has derivatives of all orders and $f'(t)$ is completely monotonic on $I$.
\end{dfn}

The class of Bernstein functions can be characterized by

\begin{prop}[{\cite[p.~15, Theorem~3.2]{Schilling-Song-Vondracek-2010}}]
A function $f:(0,\infty)\to\mathbb{R}$ is a Bernstein function if and only if it admits the representation
\begin{equation}
f(x)=a+bx+\int_0^\infty\bigl(1-e^{-xt}\bigr)\td\mu(t),
\end{equation}
where $a,b\ge0$ and $\mu$ is a measure on $(0,\infty)$ satisfying
\begin{equation*}
\int_0^\infty\min\{1,t\}\td\mu(t)<\infty.
\end{equation*}
\end{prop}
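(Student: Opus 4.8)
The plan is to prove the two implications separately, using the Hausdorff--Bernstein--Widder theorem (Proposition~\ref{Bernstein-Widder-Theorem-12b}) to translate between complete monotonicity and Laplace-type integral representations.

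For the sufficiency I would start from the stated representation and verify the defining properties directly. The hypothesis $\int_0^\infty\min\{1,t\}\td\mu(t)<\infty$ guarantees convergence for every $x>0$, since $1-e^{-xt}$ behaves like $xt$ near $t=0$ (controlled by $\int_0^1 t\td\mu(t)$) and is bounded by $1$ near $t=\infty$ (controlled by $\int_1^\infty\td\mu(t)$). As $a,b\ge0$ and $1-e^{-xt}\ge0$, this gives $f(x)\ge0$. After justifying differentiation under the integral sign on compact subintervals of $(0,\infty)$, with dominating functions $t^{n}e^{-xt}$ integrable against $\mu$, one obtains
\[
f'(x)=b+\int_0^\infty t e^{-xt}\td\mu(t),\qquad (-1)^{n}f^{(n+1)}(x)=\int_0^\infty t^{n+1}e^{-xt}\td\mu(t)\ge0
\]
for all $n\ge0$, which says precisely that $f'$ is completely monotonic; hence $f$ is a Bernstein function.

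For the necessity, suppose $f$ is a Bernstein function, so $f'$ is completely monotonic on $(0,\infty)$. Applying Proposition~\ref{Bernstein-Widder-Theorem-12b} to $f'$ yields a non-decreasing $\alpha$, equivalently a positive measure $\sigma$ on $[0,\infty)$, with $f'(x)=\int_{[0,\infty)}e^{-xt}\td\sigma(t)$. I would split off the atom at the origin by setting $b=\sigma(\{0\})\ge0$, so that $f'(x)=b+\int_{(0,\infty)}e^{-xt}\td\sigma(t)$. Since $f'\ge0$, the function $f$ is non-decreasing and bounded below by $0$, whence $a:=\lim_{x\to0^+}f(x)=\inf_{x>0}f(x)$ is finite and non-negative. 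Writing $f(x)=a+\int_0^x f'(s)\td s$ and interchanging the integrations by Tonelli's theorem (legitimate because the integrand is non-negative) gives
\[
f(x)=a+bx+\int_{(0,\infty)}\frac{1-e^{-xt}}{t}\td\sigma(t)=a+bx+\int_0^\infty\bigl(1-e^{-xt}\bigr)\td\mu(t),
\]
where $\td\mu(t)=\frac1t\td\sigma(t)$ on $(0,\infty)$. Finiteness of $f(x)$ for each $x>0$ then forces $\int_0^\infty\bigl(1-e^{-xt}\bigr)\td\mu(t)<\infty$, which is equivalent to the required condition $\int_0^\infty\min\{1,t\}\td\mu(t)<\infty$.

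The step I expect to be the main obstacle is the necessity direction, and specifically the verification that $a=f(0^+)$ is finite together with the interchange that produces the Lévy measure $\mu$. This is where the non-negativity built into the definition of a Bernstein function becomes essential: without it the integral $\int_1^\infty t^{-1}\td\sigma(t)$ could diverge, as happens for $f'(x)=e^{-x}/x$, whose antiderivative tends to $-\infty$ as $x\to0^+$, so that no finite measure $\mu$ could represent it. Making the endpoint analysis at $t=0$ and $t=\infty$ rigorous, and confirming that the triple $(a,b,\mu)$ is well defined, are the places that demand the most care.
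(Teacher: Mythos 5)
The paper does not prove this proposition; it is quoted verbatim from the cited reference \cite[p.~15, Theorem~3.2]{Schilling-Song-Vondracek-2010} and used as a known characterization. Your argument is correct and is essentially the standard proof found in that reference: differentiation under the integral sign for sufficiency, and for necessity an application of the Hausdorff--Bernstein--Widder theorem to $f'$, splitting off the atom of $\sigma$ at the origin to produce $b$, using the nonnegativity and monotonicity of $f$ to obtain a finite $a=f(0^+)$, and a Tonelli interchange yielding $\td\mu(t)=t^{-1}\td\sigma(t)$ together with the integrability condition $\int_0^\infty\min\{1,t\}\td\mu(t)<\infty$ via the two-sided comparison $c_x\min\{1,t\}\le 1-e^{-xt}\le C_x\min\{1,t\}$. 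No gaps of substance; your own flagging of where care is needed (finiteness of $f(0^+)$ and the endpoint analysis of $\mu$ at $0$ and $\infty$) is exactly right.
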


In~\cite[pp.~161\nobreakdash--162, Theorem~3]{Chen-Qi-Srivastava-09.tex} and~\cite[p.~45, Proposition~5.17]{Schilling-Song-Vondracek-2010}, it was discovered that the reciprocal of any Bernstein function is logarithmically completely monotonic.

\begin{dfn}[\cite{Atanassov}]
If $f^{(k)}(t)$ for some nonnegative integer $k$ is completely monotonic on an interval $I$, but $f^{(k-1)}(t)$ is not completely monotonic on $I$, then $f(t)$ is called a completely monotonic function of $k$-th order on an interval $I$.
\end{dfn}

It is obvious that a completely monotonic function of first order is a Bernstein function if and only if it is nonnegative on $I$.

\begin{dfn}[{\cite[p.~19, Definition~2.1]{Schilling-Song-Vondracek-2010}}]
If $f:(0,\infty)\to[0,\infty)$ can be written in the form
\begin{equation}\label{dfn-stieltjes}
f(x)=\frac{a}x+b+\int_0^\infty\frac1{s+x}{\td\mu(s)},
\end{equation}
then it is called a Stieltjes function, where $a,b\ge0$ are nonnegative constants and $\mu$ is a nonnegative measure on $(0,\infty)$ such that
\begin{equation*}
\int_0^\infty\frac1{1+s}\td\mu(s)<\infty.
\end{equation*}
\end{dfn}

The set of logarithmically completely monotonic functions on $(0,\infty)$ contains all Stieltjes functions, see~\cite{CBerg} or~\cite[Remark~4.8]{Open-TJM-2003-Banach.tex}. In other words, all the Stieltjes functions are logarithmically completely monotonic on $(0,\infty)$.
\par
In the newly-published paper~\cite{psi-proper-fraction-degree-two.tex}, a new notion ``completely monotonic degree'' of nonnegative functions was naturally introduced and initially studied.
\par
We also recall that the extended mean value $E(r,s;x,y)$ may be defined as
\begin{align}
E(r,s;x,y)&=\biggl[\frac{r(y^s-x^s)} {s(y^r-x^r)}\biggr]^{{1/(s-r)}}, & rs(r-s)(x-y)&\ne 0; \\
E(r,0;x,y)&=\biggl[\frac{y^r-x^r}{r(\ln y-\ln x)}\biggr]^{{1/r}}, & r(x-y)&\ne 0; \\
E(r,r;x,y)&=\frac1{e^{1/r}}\biggl(\frac{x^{x^r}}{y^{y^r}}\biggr)^{ {1/(x^r-y^r)}},& r(x-y)&\ne 0; \\
E(0,0;x,y)&=\sqrt{xy}\,, & x&\ne y; \\
E(r,s;x,x)&=x, & x&=y;\notag
\end{align}
where $x$ and $y$ are positive numbers and $r,s\in\mathbb{R}$. Because this mean was first defined in~\cite{stolarsky-mean}, so it is also called Stolarsky's mean. Many special mean values with two variables are special cases of $E$, for example,
\begin{align*}
E(r,2r;x,y)&=M_r(x,y), &&(\text{power mean or H\"older mean}) \\
E(1,p;x,y)&=L_p(x,y), &&(\text{generalized or extended logarithmic mean}) \\
E(1,1;x,y)&=I(x,y), &&(\text{identric or exponential mean}) \\
E(1,2;x,y)&=A(x,y), &&(\text{arithmtic mean}) \\
E(0,0;x,y)&=G(x,y), &&(\text{geometric mean}) \\
E(-2,-1;x,y)&=H(x,y), &&(\text{harmonic mean}) \\
E(0,1;x,y)&=L(x,y). &&(\text{logarithmic mean})
\end{align*}
For more information on $E$, please refer to the monograph~\cite{bullenmean}, the papers~\cite{emv-log-convex-simple.tex, Guo-Qi-Filomat-2011-May-12.tex, mon-element-exp-gen.tex}, and closely-related references therein.
\par
It is easy to see that the arithmetic mean
$$
A_{x,y}(t)=A(x+t,y+t)=A(x,y)+t
$$
is a trivial Bernstein function of $t\in(-\min\{x,y\},\infty)$ for $x,y>0$.
\par
It is not difficult to see that the harmonic mean
\begin{equation}
H_{x,y}(t)=H(x+t,y+t)=\frac2{\frac1{x+t}+\frac1{y+t}}
\end{equation}
for $t\in(-\min\{x,y\},\infty)$ and $x,y>0$ with $x\ne y$ meets
\begin{equation}\label{H-derivat-first}
H_{x,y}'(t)=\frac{2\bigl[x^2+y^2+2(x+y)t+2t^2\bigr]}{(x+y+2t)^2} =1+\frac{(x-y)^2}{(x+y+2t)^2}>1.
\end{equation}
It is obvious that the derivative $H_{x,y}'(t)$ is completely monotonic with respect to $t$. As a result, the harmonic mean $H_{x,y}(t)$ is a Bernstein function of $t$ on $(-\min\{x,y\},\infty)$ for $x,y>0$ with $x\ne y$.
\par
In~\cite[Remark~6]{new-upper-kershaw-JCAM.tex}, it was pointed out that the reciprocal of the identric mean
\begin{equation}
I_{x,y}(t)=I(x+t,y+t)=\frac1e\biggl[\frac{(x+t)^{x+t}}{(y+t)^{y+t}}\biggr]^{ {1/(x-y)}}
\end{equation}
for $x,y>0$ with $x\ne y$ is a logarithmically completely monotonic function of $t\in(-\min\{x,y\},\infty)$ and that the identric mean $I_{x,y}(t)$ for $t>-\min\{x,y\}$ with $x\ne y$ is also a completely monotonic function of first order (that is, a Bernstein function).
\par
In~\cite[p.~616]{gamma-psi-batir.tex}, it was concluded that the logarithmic mean
\begin{equation}
L_{x,y}(t)=L(x+t,y+t)
\end{equation}
is increasing and concave in $t>-\min\{x,y\}$ for $x,y>0$ with $x\ne y$. More strongly, it was proved in~\cite[Theorem~1]{log-mean-comp-mon.tex-mia} that the logarithmic mean $L_{x,y}(t)$ for $x,y>0$ with $x\ne y$ is a completely monotonic function of first order in $t\in(-\min\{x,y\},\infty)$, that is, the logarithmic mean $L_{x,y}(t)$ is a Bernstein function of $t\in(-\min\{x,y\},\infty)$.
\par
Recently, the geometric mean
\begin{equation}\label{G(x=y)(t)}
G_{x,y}(t)=G(x+t,y+t)=\sqrt{(x+t)(y+t)}\,
\end{equation}
was proved in~\cite{Geomeric-Mean-Complete-revised.tex} to be a Bernstein function of $t$ on $(-\min\{x,y\},\infty)$ for $x,y>0$ with $x\ne y$, and its integral representation
\begin{equation}\label{G(x-y)-integ-exp}
G_{x,y}(t)=G(x,y)+t+\frac{x-y}{2\pi} \int_0^\infty\frac{\rho((x-y)s)}s e^{-ys}\bigl(1-e^{-st}\bigr)\td s
\end{equation}
for $x>y>0$ and $t>-y$ was discovered, where
\begin{equation}\label{rho-funct-dfn}
\begin{aligned}
\rho(s)&=\int_0^{1/2} q(u)\bigl[1-e^{-(1-2u)s}\bigr]e^{-us}\td u\\
&=\int_0^{1/2} q\biggl(\frac12-u\biggr)\bigl(e^{us}-e^{-us}\bigr)e^{-s/2}\td u\\
&\ge0
\end{aligned}
\end{equation}
on $(0,\infty)$ and
\begin{equation}
q(u)=\sqrt{\frac1u-1}\,-\frac1{\sqrt{1/u-1}\,}
\end{equation}
on $(0,1)$.
\par
Let $a=(a_1,a_2,\dotsc,a_n)$ for $n\in\mathbb{N}$, the set of all positive integers, be a given sequence of positive numbers. Then the arithmetic and geometric means $A_n(a)$ and $G_n(a)$ of the numbers $a_1, a_2, \dotsc, a_n$ are defined respectively as
\begin{equation}
A_n(a)=\frac1n\sum_{k=1}^na_k
\end{equation}
and
\begin{equation}
G_n(a)=\Biggl(\prod_{k=1}^na_k\Biggr)^{1/n}.
\end{equation}
It is general knowledge that
\begin{equation}\label{AG-ineq}
G_n(a)\le A_n(a),
\end{equation}
with equality if and only if $a_1=a_2=\dotsm=a_n$.
\par
There has been a large number, presumably over one hundred, of proofs of the AG inequality~\eqref{AG-ineq} in the mathematical literature. The most complete information, so far, can be found in the monographs~\cite{bellman, bullenmean, hlp, kuang-3rd, mit, Mitrinovic-Vasic-1969} and a lot of references therein.
\par
In this paper, we establish, by using Cauchy integral formula in the theory of complex functions, an integral representation of the geometric mean
\begin{equation}
G_n(a+z)=\Biggl[\prod_{k=1}^n(a_k+z)\Biggr]^{1/n},
\end{equation}
where $a=(a_1,a_2,\dotsc,a_n)$ satisfies $a_k>0$ for $1\le k\le n$ and
$$
z\in\mathbb{C}\setminus(-\infty,-\min\{a_k,1\le k\le n\}].
$$
From this integral representation, it is immediately derived that the geometric mean $G_n(a+t)$ for $t\in(-\min\{a_k,1\le k\le n\},\infty)$ is a Bernstein function, where $a+t=(a_1+t,a_2+t,\dotsc,a_n+t)$, and a new proof of the AG inequality~\eqref{AG-ineq} is provided.

\section{Lemmas}

In order to prove our main results, we need the following lemmas.

\begin{lem}[Cauchy integral formula~{\cite[p.~113]{Gamelin-book-2001}}]\label{cauchy-formula}
Let $D$ be a bounded domain with piecewise smooth boundary. If $f(z)$ is analytic on $D$, and $f(z)$ extends smoothly to the boundary of $D$, then
\begin{equation}
f(z)=\frac1{2\pi i}\oint_{\partial D}\frac{f(w)}{w-z}\td w,\quad z\in D.
\end{equation}
\end{lem}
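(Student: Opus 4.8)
The plan is to reduce the statement to Cauchy's integral theorem (the vanishing of contour integrals of functions analytic on a domain and continuous up to its boundary), which I would regard as the fundamental prerequisite. First I would fix a point $z\in D$ and observe that the integrand $w\mapsto\frac{f(w)}{w-z}$ is analytic on $D\setminus\{z\}$, the only difficulty being the simple pole at $w=z$. To isolate this singularity, I would choose $\varepsilon>0$ small enough that the closed disk $\{w:|w-z|\le\varepsilon\}$ lies inside $D$, and pass to the region $D_\varepsilon=D\setminus\{w:|w-z|\le\varepsilon\}$, on which the integrand is analytic and, by hypothesis, extends smoothly to the boundary.

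Next I would apply Cauchy's integral theorem on $D_\varepsilon$. Its boundary has two components: the outer curve $\partial D$ traversed in the positive sense, together with the inner circle $|w-z|=\varepsilon$ traversed in the negative (clockwise) sense. Since $\frac{f(w)}{w-z}$ is analytic on $D_\varepsilon$, the theorem gives $\oint_{\partial D_\varepsilon}\frac{f(w)}{w-z}\td w=0$, which rearranges to
\begin{equation*}
\oint_{\partial D}\frac{f(w)}{w-z}\td w=\oint_{|w-z|=\varepsilon}\frac{f(w)}{w-z}\td w,
\end{equation*}
where now both contours carry the positive orientation. The left-hand side is independent of $\varepsilon$, so it suffices to evaluate the right-hand side and take a limit.

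Then I would parametrize the small circle by $w=z+\varepsilon e^{i\theta}$ with $0\le\theta\le2\pi$, so that $\td w=i\varepsilon e^{i\theta}\td\theta$ and the right-hand integral collapses to $i\int_0^{2\pi}f(z+\varepsilon e^{i\theta})\td\theta$. Writing this as $2\pi i\,f(z)+i\int_0^{2\pi}\bigl[f(z+\varepsilon e^{i\theta})-f(z)\bigr]\td\theta$, I would let $\varepsilon\to0$ and invoke the continuity of $f$ at $z$ to show that the error integral tends to $0$. Dividing by $2\pi i$ then yields the asserted formula.

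The main obstacle I anticipate is not any single computation but the careful justification of applying Cauchy's integral theorem on the multiply-connected region $D_\varepsilon$, with its piecewise-smooth boundary consisting of an inner and an outer component, together with the bookkeeping of orientations; the limiting step is then routine once one uses that $f$ is uniformly continuous on a small neighborhood of $z$. Since the result is the classical Cauchy integral formula recorded in~\cite[p.~113]{Gamelin-book-2001}, I would expect any correct argument to follow essentially this excision-and-limit route.
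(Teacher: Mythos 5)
The paper gives no proof of this lemma---it is quoted verbatim from~\cite[p.~113]{Gamelin-book-2001}---and your excision-and-limit argument is precisely the standard proof given there: remove a small disk about $z$, apply Cauchy's theorem to the resulting multiply connected region, and let the radius tend to zero using continuity of $f$. The proposal is correct and matches the classical approach of the cited source.
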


\begin{lem}\label{AG-New-lem=1}
For $z\in\mathbb{C}\setminus(-\infty,-\min\{a_k,1\le k\le n\}]$ with $a=(a_1,a_2,\dotsc,a_n)$ and $a_k>0$, the principal branch of the complex function
\begin{equation}\label{f(a-n)-dfn-eq}
f_{a,n}(z)=G_n(a+z)-z,
\end{equation}
where $a+z=(a_1+z,a_2+z,\dotsc,a_n+z)$, meets
\begin{equation}\label{AG-New-lem=1-lim}
\lim_{z\to\infty}f_{a,n}(z)=A_n(a).
\end{equation}
\end{lem}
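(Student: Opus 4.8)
The plan is to compute the asymptotics of $G_n(a+z)$ directly as $z\to\infty$. Since each factor $a_k+z$ lies in the cut plane $\mathbb{C}\setminus(-\infty,0]$ for $z$ in the given domain, the principal branch is
\[
G_n(a+z)=\exp\Biggl(\frac1n\sum_{k=1}^n\ln(a_k+z)\Biggr),
\]
where $\ln$ denotes the principal logarithm. First I would split off the dominant term $\ln z$: for $|z|$ large the principal arguments of $a_k+z$ and of $z$ differ by less than $\pi$, so there is no $2\pi$ jump and
\[
\ln(a_k+z)-\ln z=\ln\biggl(1+\frac{a_k}{z}\biggr).
\]

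Next I would expand each summand. Because $a_k/z\to0$ as $|z|\to\infty$ in the slit plane, the Taylor expansion $\ln(1+w)=w+O(w^2)$ gives
\[
\frac1n\sum_{k=1}^n\ln(a_k+z)=\ln z+\frac1n\sum_{k=1}^n\biggl[\frac{a_k}{z}+O\biggl(\frac1{z^2}\biggr)\biggr]=\ln z+\frac{A_n(a)}{z}+O\biggl(\frac1{z^2}\biggr).
\]
Exponentiating and using $e^{w}=1+w+O(w^2)$ then yields
\[
G_n(a+z)=z\exp\biggl(\frac{A_n(a)}{z}+O\biggl(\frac1{z^2}\biggr)\biggr)=z+A_n(a)+O\biggl(\frac1z\biggr),
\]
so that $f_{a,n}(z)=G_n(a+z)-z=A_n(a)+O(1/z)$, and letting $z\to\infty$ produces the asserted limit \eqref{AG-New-lem=1-lim}.

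The computation itself is routine; the one point requiring care is the branch bookkeeping in the complex plane, namely the identity $\ln(a_k+z)-\ln z=\ln(1+a_k/z)$. This is where I expect the main (albeit modest) obstacle to lie: one must confirm that for $|z|$ large enough the points $a_k+z$, $z$, and $1+a_k/z$ stay on the same sheet of the principal logarithm, i.e.\ that the principal arguments add without a $2\pi$ correction. This holds uniformly once $|z|$ exceeds a bound depending only on $\max_k a_k$, because outside any fixed neighbourhood of the cut one has $a_k/z\to0$ uniformly, forcing $\arg(1+a_k/z)\to0$ and hence $|\arg(a_k+z)-\arg z|<\pi$. With this in hand all the $O(1/z)$ and $O(1/z^2)$ remainders are uniform in $\arg z$, and the limit \eqref{AG-New-lem=1-lim} follows.
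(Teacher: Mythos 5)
Your proof is correct, and it takes a mildly different route from the paper's. The paper also begins by factoring out $z$, writing $f_{a,n}(z)=z\bigl[G_n(1+a/z)-1\bigr]$, but then substitutes $w=1/z$ and evaluates $\lim_{w\to0}\bigl[G_n(1+aw)-1\bigr]/w$ by L'H\^opital's rule, i.e.\ as the derivative of $\prod_{k=1}^n(1+a_kw)^{1/n}$ at $w=0$, which is $A_n(a)$. You instead expand $\frac1n\sum_k\ln(1+a_k/z)$ and exponentiate, obtaining the stronger statement $G_n(a+z)=z+A_n(a)+O(1/z)$. The two computations extract the same first-order coefficient; yours costs a couple more lines but yields an explicit error term, and, more importantly, you make explicit the branch identity $\ln(a_k+z)=\ln z+\ln(1+a_k/z)$ that the paper uses silently when it writes $G_n(a+z)=zG_n(1+a/z)$ for the principal branch. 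That identity in fact holds for every $z$ in the slit plane with $a_k+z$ off the cut (not only for large $|z|$), since $\Im(a_k+z)=\Im z$ and adding $a_k>0$ moves $z$ toward the positive real axis, so $|\arg(a_k+z)|\le|\arg z|<\pi$ and no $2\pi$ correction can occur; your large-$|z|$ justification is a correct special case of this. So your argument is complete and, if anything, slightly more careful than the paper's on the one genuinely delicate point.
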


\begin{proof}
By L'H\^ospital's rule in the theory of complex functions, we have
\begin{multline*}
\lim_{z\to\infty}f_{a,n}(z)=\lim_{z\to\infty}\biggl\{z\biggl[G_n\biggl(1+\frac{a}z\biggr)-1\biggr]\biggr\} \\
=\lim_{z\to0}\frac{G_n(1+az)-1}{z}
=\lim_{z\to0}\frac{\td}{\td z}\Biggl[\prod_{k=1}^n(1+a_kz)\Biggr]^{1/n}
=A_n(a),
\end{multline*}
where
$
1+\frac{a}z=\bigl(1+\frac{a_1}z,1+\frac{a_2}z,\dotsc,1+\frac{a_n}z\bigr)
$
and
$1+az=(1+a_1z,1+a_2z,\dotsc,1+a_nz)$. Lemma~\ref{AG-New-lem=1} is thus proved.
\end{proof}

\begin{lem}\label{Im-comput-AG-New-P-lem}
Let $a=(a_1,a_2,\dotsc,a_n)$ with $a_k>0$ for $1\le k\le n$ and let $[a]$ be the rearrangement of the positive sequence $a$ in an ascending order, that is, $[a]=\bigl(a_{[1]},a_{[2]},\dotsc,a_{[n]}\bigr)$ and $a_{[1]}\le a_{[2]}\le \dotsm \le a_{[n]}$.
For $z\in\mathbb{C}\setminus(-\infty,0]$, let
\begin{equation}\label{h(z)-dfn-eq}
h_n(z)=G_n\bigl([a]-a_{[1]}+z\bigr)-z,
\end{equation}
where $[a]-a_{[1]}+z=\bigl(z,a_{[2]}-a_{[1]}+z,\dotsc,a_{[n]}-a_{[1]}+z\bigr)$. Then the principal branch of $h_n(z)$ satisfies
\begin{multline}\label{Im-comput-AG-New-P-eq}
\lim_{\varepsilon\to0^+}\Im h_n(-t+i\varepsilon)=\\
\begin{cases}\displaystyle
\Biggl[\prod_{k=1}^n\bigl|a_{[k]}-a_{[1]}-t\bigr|\Biggr]^{1/n}\sin\frac{\ell\pi}n, & t\in\bigl(a_{[\ell]}-a_{[1]},a_{[\ell+1]}-a_{[1]}\bigr]\\
0, & t\ge a_{[n]}-a_{[1]}
\end{cases}
\end{multline}
for $1\le\ell\le n-1$.
\end{lem}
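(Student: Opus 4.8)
The plan is to exploit the fact that, on the slit plane, the principal branch of $h_n$ is governed by the principal logarithm. Writing $b_k:=a_{[k]}-a_{[1]}$ so that $0=b_1\le b_2\le\dots\le b_n$, the branch that is analytic on all of $\mathbb{C}\setminus(-\infty,0]$ is
$$
G_n\bigl([a]-a_{[1]}+z\bigr)=\exp\Biggl(\frac1n\sum_{k=1}^n\ln(b_k+z)\Biggr),
$$
where $\ln$ denotes the principal logarithm; this is the product of the principal $n$-th roots, and it is exactly the determination forced by the requirement that $h_n$ be analytic on the stated slit plane, because $\bigcup_k(-\infty,-b_k]=(-\infty,0]$. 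I would first dispose of the linear term: since $z=-t+i\varepsilon$ gives $\Im(-z)=-\varepsilon\to0$, one has $\lim_{\varepsilon\to0^+}\Im h_n(-t+i\varepsilon)=\lim_{\varepsilon\to0^+}\Im G_n([a]-a_{[1]}-t+i\varepsilon)$, so the whole problem reduces to tracking the boundary values of the principal logarithms $\ln(b_k-t+i\varepsilon)$.

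Next I would evaluate those boundary values factor by factor. Because $\varepsilon>0$, every factor $b_k-t+i\varepsilon$ lies strictly in the open upper half-plane, hence off the cut, so each principal logarithm extends continuously to $\varepsilon=0$ and
$$
\lim_{\varepsilon\to0^+}\arg(b_k-t+i\varepsilon)=\begin{cases}0,&b_k>t,\\ \pi,&b_k<t,\end{cases}\qquad \lim_{\varepsilon\to0^+}\ln|b_k-t+i\varepsilon|=\ln|b_k-t|.
$$
Fixing $t\in(b_\ell,b_{\ell+1}]$, exactly the indices $k=1,\dots,\ell$ satisfy $b_k<t$, while the remaining indices satisfy $b_k\ge t$. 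Summing, the total argument tends to $\ell\pi$ and the sum of the log-moduli tends to $\ln\prod_{k=1}^n|b_k-t|$, so that
$$
\lim_{\varepsilon\to0^+}G_n\bigl([a]-a_{[1]}-t+i\varepsilon\bigr)=\Biggl[\prod_{k=1}^n|b_k-t|\Biggr]^{1/n}\exp\biggl(i\frac{\ell\pi}n\biggr),
$$
whose imaginary part is precisely the claimed $\bigl[\prod_{k=1}^n|a_{[k]}-a_{[1]}-t|\bigr]^{1/n}\sin(\ell\pi/n)$.

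For the degenerate situations I would argue separately but easily. Whenever $t$ coincides with some $b_k$ (in particular at the right endpoint $t=b_{\ell+1}$, and whenever $t\ge b_n$), the corresponding factor has vanishing modulus, $\ln|b_k-t+i\varepsilon|\to-\infty$, so the product tends to $0$ and the imaginary part vanishes, consistently with the formula; and for $t>b_n$ all $n$ arguments tend to $\pi$, giving the factor $\exp(i\pi)=-1$ and again a vanishing imaginary part. The only real obstacle is to pin down the meaning of the \emph{principal branch} of $G_n$: it must be read as the exponential of $\frac1n$ times the \emph{sum} of the principal logarithms, not as the single principal $n$-th root of the product $\prod_k(b_k+z)$. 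The two differ by a root of unity because the summed argument $\ell\pi$ need not lie in $(-\pi,\pi]$, and only the summed-argument branch — the analytic continuation from the positive real axis, and hence the branch already fixed by the analyticity of $f_{a,n}$ on the slit plane — yields $\sin(\ell\pi/n)$ rather than the sine of a reduced angle. Once this identification is settled, every remaining step is the elementary limit of a principal logarithm approached from the upper half-plane.
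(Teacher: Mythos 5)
Your proposal is correct and follows essentially the same route as the paper's proof: both write the principal branch as $\exp\bigl(\frac1n\sum_k\ln(b_k-t+i\varepsilon)\bigr)$, track the limiting arguments ($0$ or $\pi$) factor by factor to get the total phase $\ell\pi/n$, and treat the degenerate points $t=b_{\ell+1}$ separately via the vanishing modulus. Your explicit remark that the branch must be the summed-logarithm determination (not the principal $n$-th root of the product) is a useful clarification of a point the paper leaves implicit, but it does not change the argument.
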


\begin{proof}
For $t\in(0,\infty)\setminus\bigl\{a_{[\ell+1]}-a_{[1]},1\le\ell\le n-1\bigr\}$ and $\varepsilon>0$, we have
\begin{multline*}
h_n(-t+i\varepsilon)=G_n\bigl([a]-a_{[1]}-t+i\varepsilon\bigr)+t-i\varepsilon\\
=\exp\Biggl[\frac1n\sum_{k=1}^n\ln\bigl(a_{[k]}-a_{[1]}-t+i\varepsilon\bigr)\Biggr]+t-i\varepsilon\\
=\exp\Biggl\{\frac1n\sum_{k=1}^n\bigl[\ln\bigl|a_k-a_{[1]}-t+i\varepsilon\bigr| +i\arg\bigl(a_{[k]}-a_{[1]}-t+i\varepsilon\bigr)\bigr]\Biggr\}+t-i\varepsilon\\
\to
\begin{cases}\displaystyle
\exp\Biggl(\frac1n\sum_{k=1}^n\ln\bigl|a_{[k]}-a_{[1]}-t\bigr| +\frac{\ell\pi}ni\Biggr)+t,  t\in\bigl(a_{[\ell]}-a_{[1]},a_{[\ell+1]}-a_{[1]}\bigr)\\ \displaystyle
\exp\Biggl(\frac1n\sum_{k=1}^n\ln\bigl|a_{[k]}-a_{[1]}-t\bigr| +\pi i\Biggr)+t,  t>a_{[n]}-a_{[1]}
\end{cases}\\
=
\begin{cases}\displaystyle
\Biggl(\prod_{k=1}^n\bigl|a_{[k]}-a_{[1]}-t\bigr|\Biggr)^{1/n} \exp\bigg(\frac{\ell\pi}ni\biggr)+t,  t\in\bigl(a_{[\ell]}-a_{[1]},a_{[\ell+1]}-a_{[1]}\bigr)\\ \displaystyle
\Biggl(\prod_{k=1}^n\bigl|a_{[k]}-a_{[1]}-t\bigr|\Biggr)^{1/n} \exp(\pi i)+t,  t>a_{[n]}-a_{[1]}
\end{cases}
\end{multline*}
as $\varepsilon\to0^+$. As a result, we have
\begin{multline*}
\lim_{\varepsilon\to0^+}\Im h_n(-t+i\varepsilon)=\\
\begin{cases}\displaystyle
\Biggl(\prod_{k=1}^n\bigl|a_{[k]}-a_{[1]}-t\bigr|\Biggr)^{1/n}\sin\frac{\ell\pi}n, & t\in\bigl(a_{[\ell]}-a_{[1]},a_{[\ell+1]}-a_{[1]}\bigr);\\
0, & t>a_{[n]}-a_{[1]}.
\end{cases}
\end{multline*}
\par
For $t=a_{[\ell+1]}-a_{[1]}$ for $1\le\ell\le n-1$, we have
\begin{multline*}
h_n(-t+i\varepsilon)=\exp\Biggl[\frac1n\sum_{k\ne\ell+1}^n \ln\bigl(a_{[k]}-a_{[1]}-t+i\varepsilon\bigr)+\frac1n\ln(i\varepsilon)\Biggr]+t-i\varepsilon\\
\begin{aligned}
&=\exp\Biggl[\frac1n\sum_{k\ne\ell+1}^n \ln\bigl(a_{[k]}-a_{[1]}-t+i\varepsilon\bigr)\Biggr] \exp\biggl[\frac1n\biggl(\ln|\varepsilon|+\frac\pi2i\biggr)\biggl]+t-i\varepsilon\\
&\to\exp\Biggl[\frac1n\sum_{k\ne\ell+1}^n \ln\bigl(a_{[k]}-a_{[1]}-t\bigr)\Biggr] \lim_{\varepsilon\to0^+}\exp\biggl[\frac1n\biggl(\ln|\varepsilon|+\frac\pi2i\biggr)\biggl]+t\\
&=t
\end{aligned}
\end{multline*}
as $\varepsilon\to0^+$. Hence, when $t=a_{[\ell+1]}-a_{[1]}$ for $1\le\ell\le n-1$, we have
\begin{equation*}
\lim_{\varepsilon\to0^+}\Im h_n(-t+i\varepsilon)=0.
\end{equation*}
The proof of Lemma~\ref{Im-comput-AG-New-P-lem} is completed.
\end{proof}

\section{The geometric mean is a Bernstein function}

We now turn our attention to establishing an integral representation of the geometric mean $G_n(a+z)$ and to showing that the geometric mean is a Bernstein function.

\begin{thm}\label{AG-New-thm1}
Let $a=(a_1,a_2,\dotsc,a_n)$ with $a_k>0$ for $1\le k\le n$ and let $[a]$ denote the rearrangement of the sequence $a$ in an ascending order, that is, $[a]=\bigl(a_{[1]},a_{[2]},\dotsc,a_{[n]}\bigr)$ and $a_{[1]}\le a_{[2]}\le \dotsm \le a_{[n]}$.
For $z\in\mathbb{C}\setminus(-\infty,-\min\{a_k,1\le k\le n\}]$, the principal branch of the geometric mean $G_n(a+z)$ has the integral representation
\begin{equation}\label{AG-New-eq1}
G_n(a+z)=A_n(a)+z-\frac1\pi\sum_{\ell=1}^{n-1}\sin\frac{\ell\pi}n \int_{a_{[\ell]}}^{a_{[\ell+1]}} \Biggl|\prod_{k=1}^n(a_k-t)\Biggr|^{1/n} \frac{\td t}{t+z},
\end{equation}
where $a+z=(a_1+z,a_2+z,\dotsc,a_n+z)$.
Consequently, the geometric mean $G_n(a+t)$ is a Bernstein function on $(-\min\{a_k,1\le k\le n\},\infty)$.
\end{thm}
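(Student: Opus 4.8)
The plan is to apply the Cauchy integral formula in Lemma~\ref{cauchy-formula} to the auxiliary function
\begin{equation*}
g(z)=f_{a,n}(z)-A_n(a)=G_n(a+z)-z-A_n(a).
\end{equation*}
Writing $G_n(a+z)=\exp\bigl[\frac1n\sum_{k=1}^n\ln(a_k+z)\bigr]$ with principal logarithms, the cuts $(-\infty,-a_k]$ of the individual terms merge into the single slit $(-\infty,-a_{[1]}]$, so $g$ is analytic on $\mathbb{C}\setminus(-\infty,-a_{[1]}]$. Expanding $\ln(1+a_k/z)$ gives $G_n(a+z)=z+A_n(a)+O(1/z)$, whence $g(z)=O(1/z)$ as $z\to\infty$; this refinement of Lemma~\ref{AG-New-lem=1} is what will kill the contribution of the circle at infinity.

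Next I would integrate $g(w)/(w-z)$ over a positively oriented circle of large radius $R$ indented to run along both banks of the slit $(-\infty,-a_{[1]}]$ together with a small loop about the endpoint $-a_{[1]}$. As $R\to\infty$ the outer arc drops out because $g(w)/(w-z)=O(1/R^2)$ there, and the small loop drops out because $g$ stays bounded near $-a_{[1]}$ (one factor behaves like $(w+a_{[1]})^{1/n}$), while the interior nodes $-a_{[k]}$ cause no trouble since the relevant density vanishes there. Since $g$ is real on $(-a_{[1]},\infty)$, Schwarz reflection yields $g(\overline w)=\overline{g(w)}$, so the jump across the slit at $w=-s$ is $g(-s+i0)-g(-s-i0)=2i\,\Im g(-s+i0)$. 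Combining the two banks and keeping the orientation straight leads to
\begin{equation*}
g(z)=-\frac1\pi\int_{a_{[1]}}^\infty\frac{\Im g(-s+i0)}{s+z}\td s.
\end{equation*}

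To evaluate the boundary imaginary part I would use the symmetry of $G_n$ in its arguments, which gives the shift identity $h_n(w)=f_{a,n}(w-a_{[1]})-a_{[1]}$ and hence $\Im g(-s+i0)=\Im f_{a,n}(-s+i0)=\Im h_n\bigl(-(s-a_{[1]})+i0\bigr)$. Setting $t=s-a_{[1]}$ in Lemma~\ref{Im-comput-AG-New-P-lem} and using $\bigl|a_{[k]}-a_{[1]}-t\bigr|=|a_k-s|$, the jump is supported on $(a_{[1]},a_{[n]})$ and equals $\bigl|\prod_{k=1}^n(a_k-s)\bigr|^{1/n}\sin\frac{\ell\pi}n$ on each subinterval $(a_{[\ell]},a_{[\ell+1]})$, the tail $s>a_{[n]}$ dropping out since $\sin\pi=0$. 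Substituting this density and renaming $s$ as $t$ reproduces exactly the representation~\eqref{AG-New-eq1}.

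Finally, for the Bernstein assertion I would take $z=t$ real with $t>-a_{[1]}$ and differentiate~\eqref{AG-New-eq1} under the integral sign to obtain
\begin{equation*}
\frac{\td}{\td t}G_n(a+t)=1+\frac1\pi\sum_{\ell=1}^{n-1}\sin\frac{\ell\pi}n\int_{a_{[\ell]}}^{a_{[\ell+1]}}\biggl|\prod_{k=1}^n(a_k-s)\biggr|^{1/n}\frac{\td s}{(s+t)^2}.
\end{equation*}
For each fixed $s\ge a_{[1]}$ the kernel $1/(s+t)^2$ is completely monotonic in $t$ on $(-a_{[1]},\infty)$ because $s+t>0$ there, the weights $\sin\frac{\ell\pi}n$ are positive for $1\le\ell\le n-1$, and the density is nonnegative; thus the derivative is a nonnegative superposition of completely monotonic functions and so is itself completely monotonic, while $G_n(a+t)>0$ supplies nonnegativity. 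By the definition of a Bernstein function, $G_n(a+t)$ is therefore a Bernstein function on $(-a_{[1]},\infty)$. The step I expect to be the main obstacle is the rigorous contour analysis: designing the indented circle, proving that both the outer arc and the endpoint loop vanish in the limit (where the $O(1/z)$ decay and the mild $(w+a_{[1]})^{1/n}$ singularity are needed), and tracking the orientation so that the jump produces the correct factor $-1/\pi$.
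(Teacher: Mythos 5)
Your proposal is correct and follows essentially the same route as the paper: a Cauchy-integral/keyhole-contour argument around the branch cut, with Lemma~\ref{AG-New-lem=1} controlling the outer circle, Lemma~\ref{Im-comput-AG-New-P-lem} supplying the jump $2i\,\Im$ across the slit, and differentiation under the integral sign for the Bernstein conclusion. The only (cosmetic) differences are that you subtract $A_n(a)$ up front so the outer arc contributes zero rather than $2\pi i A_n$, and you keep the branch point at $-a_{[1]}$ and shift only when invoking Lemma~\ref{Im-comput-AG-New-P-lem}, whereas the paper works with the shifted function $h_n$ throughout and translates back at the end.
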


\begin{proof}
By standard arguments, it is not difficult to see that
\begin{equation}\label{weighted-geometric-eq2-n}
\lim_{z\to0^+}[zh_n(z)]=0\quad \text{and}\quad h_n(\overline{z})=\overline{h_n(z)},
\end{equation}
where $h_n(z)$ is defined by~\eqref{h(z)-dfn-eq}.
\par
For any fixed point $z\in\mathbb{C}\setminus(-\infty,0]$, choose $0<\varepsilon<1$ and $r>0$ such that $0<\varepsilon<|z|<r$, and consider the positively oriented contour $C(\varepsilon,r)$ in $\mathbb{C}\setminus(-\infty,0]$ consisting of the half circle $z=\varepsilon e^{i\theta}$ for $\theta\in\bigl[-\frac\pi2,\frac\pi2\bigr]$ and the half lines $z=x\pm i\varepsilon$ for $x\le0$ until they cut the circle $|z|=r$, which close the contour at the points $-r(\varepsilon)\pm i\varepsilon$, where $0<r(\varepsilon)\to r$ as $\varepsilon\to0$. See Figure~\ref{note-on-li-chen-conj.eps}.
\begin{figure}[htbp]
\includegraphics[width=0.75\textwidth]{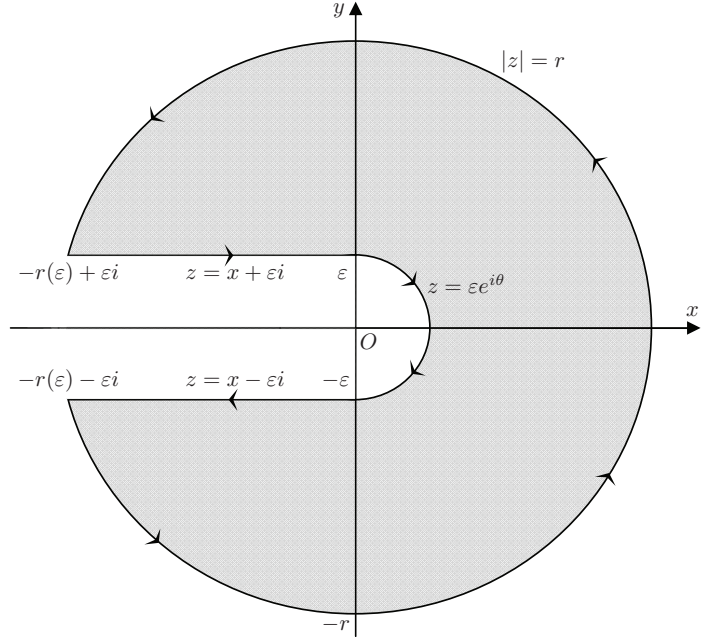}\\
\caption{The contour $C(\varepsilon,r)$}\label{note-on-li-chen-conj.eps}
\end{figure}
\par
By Cauchy integral formula, that is, Lemma~\ref{cauchy-formula}, we have
\begin{equation}
\begin{split}\label{h(z)-Cauchy-Apply}
h_n(z)&=\frac1{2\pi i}\oint_{C(\varepsilon,r)}\frac{h_n(w)}{w-z}\td w\\
&=\frac1{2\pi i}\biggl[\int_{\pi/2}^{-\pi/2}\frac{i\varepsilon e^{i\theta}h\bigl(\varepsilon e^{i\theta}\bigr)}{\varepsilon e^{i\theta}-z}\td\theta  +\int_{\arg[-r(\varepsilon)-i\varepsilon]}^{\arg[-r(\varepsilon)+i\varepsilon]}\frac{ir e^{i\theta}h\bigl(re^{i\theta}\bigr)}{re^{i\theta}-z}\td\theta\\
&\quad+\int_{-r(\varepsilon)}^0 \frac{h_n(x+i\varepsilon)}{x+i\varepsilon-z}\td x+\int_0^{-r(\varepsilon)}\frac{h_n(x-i\varepsilon)}{x-i\varepsilon-z}\td x\biggr].
\end{split}
\end{equation}
By the limit in~\eqref{weighted-geometric-eq2-n}, it follows that
\begin{equation}\label{zf(z)=0}
\lim_{\varepsilon\to0^+}\int_{\pi/2}^{-\pi/2}\frac{i\varepsilon e^{i\theta}h_n\bigl(\varepsilon e^{i\theta}\bigr)}{\varepsilon e^{i\theta}-z}\td\theta=0.
\end{equation}
By virtue of the limit~\eqref{AG-New-lem=1-lim} in Lemma~\ref{AG-New-lem=1}, we deduce that
\begin{equation}\label{big-circle-int=0}
\begin{split}
\lim_{\substack{\varepsilon\to0^+\\r\to\infty}} \int_{\arg[-r(\varepsilon)-i\varepsilon]}^{\arg[-r(\varepsilon)+i\varepsilon]}\frac{ir e^{i\theta}h_n\bigl(re^{i\theta}\bigr)}{re^{i\theta}-z}\td\theta
&=\lim_{r\to\infty}\int_{-\pi}^{\pi}\frac{ir e^{i\theta}h_n\bigl(re^{i\theta}\bigr)}{re^{i\theta}-z}\td\theta\\
&=2A_n\bigl([a]-a_{[1]}\bigr)\pi i,
\end{split}
\end{equation}
where $[a]-a_{[1]}=\bigl(0,a_{[2]}-a_{[1]},\dotsc,a_{[n]}-a_{[1]}\bigr)$.
Utilizing the second formula in~\eqref{weighted-geometric-eq2-n} and the limit~\eqref{Im-comput-AG-New-P-eq} in Lemma~\ref{Im-comput-AG-New-P-lem} results in
\begin{align}
&\quad\int_{-r(\varepsilon)}^0 \frac{h_n(x+i\varepsilon)}{x+i\varepsilon-z}\td x
+\int_0^{-r(\varepsilon)}\frac{h_n(x-i\varepsilon)}{x-i\varepsilon-z}\td x \notag\\
&=\int_{-r(\varepsilon)}^0 \biggl[\frac{h_n(x+i\varepsilon)}{x+i\varepsilon-z}-\frac{h_n(x-i\varepsilon)}{x-i\varepsilon-z}\biggr]\td x\notag\\
&=\int_{-r(\varepsilon)}^0\frac{(x-i\varepsilon-z)h_n(x+i\varepsilon) -(x+i\varepsilon-z)h_n(x-i\varepsilon)} {(x+i\varepsilon-z)(x-i\varepsilon-z)}\td x\notag\\
&=\int_{-r(\varepsilon)}^0\frac{(x-z)[h_n(x+i\varepsilon)-h_n(x-i\varepsilon)] -i\varepsilon[h_n(x-i\varepsilon)+h_n(x+i\varepsilon)]} {(x+i\varepsilon-z)(x-i\varepsilon-z)}\td x\notag\\
&=2i\int_{-r(\varepsilon)}^0\frac{(x-z)\Im h_n(x+i\varepsilon) -\varepsilon\Re h_n(x+i\varepsilon)} {(x+i\varepsilon-z)(x-i\varepsilon-z)}\td x\notag\\
&\to2i\int_{-r}^0\frac{\lim_{\varepsilon\to0^+}\Im h_n(x+i\varepsilon)}{x-z}\td x\notag\\
&=-2i\int^r_0\frac{\lim_{\varepsilon\to0^+}\Im h_n(-t+i\varepsilon)}{t+z}\td t\notag\\
&\to-2i\int^\infty_0\frac{\lim_{\varepsilon\to0^+}\Im h_n(-t+i\varepsilon)}{t+z}\td t\notag\\
&=-2i\sum_{\ell=1}^{n-1}\sin\frac{\ell\pi}n \int_{a_{[\ell]}-a_{[1]}}^{a_{[\ell+1]}-a_{[1]}} \Biggl[\prod_{k=1}^n\bigl|a_{[k]}-a_{[1]}-t\bigr|\Biggr]^{1/n} \frac{\td t}{t+z} \label{level0lines}
\end{align}
as $\varepsilon\to0^+$ and $r\to\infty$. Substituting equations~\eqref{zf(z)=0}, \eqref{big-circle-int=0}, and~\eqref{level0lines} into~\eqref{h(z)-Cauchy-Apply} and simplifying generate
\begin{equation}\label{h-n(z)=int}
h_n(z)=A_n\bigl([a]-a_{[1]}\bigr)-\frac1\pi\sum_{\ell=1}^{n-1}\sin\frac{\ell\pi}n \int_{a_{[\ell]}-a_{[1]}}^{a_{[\ell+1]}-a_{[1]}} \Biggl[\prod_{k=1}^n\bigl|a_{[k]}-a_{[1]}-t\bigr|\Biggr]^{1/n} \frac{\td t}{t+z}.
\end{equation}
From~\eqref{f(a-n)-dfn-eq} and~\eqref{h(z)-dfn-eq}, it is easy to obtain that
$$
f_{a,n}(z)=h_n\bigl(z+a_{[1]}\bigr)+a_{[1]}.
$$
Combining this with~\eqref{h-n(z)=int} and changing the variables of integrals, it is immediate to deduce that
\begin{align*}
f_{a,n}(z)&=A_n\bigl([a]-a_{[1]}\bigr)+a_{[1]}\\
&\quad-\frac1\pi\sum_{\ell=1}^{n-1}\sin\frac{\ell\pi}n \int_{a_{[\ell]}-a_{[1]}}^{a_{[\ell+1]}-a_{[1]}} \Biggl[\prod_{k=1}^n\bigl|a_{[k]}-a_{[1]}-t\bigr|\Biggr]^{1/n} \frac{\td t}{t+z+a_{[1]}}\\
&=A_n([a])-\frac1\pi\sum_{\ell=1}^{n-1}\sin\frac{\ell\pi}n \int_{a_{[\ell]}}^{a_{[\ell+1]}} \Biggl[\prod_{k=1}^n|a_{[k]}-t|\Biggr]^{1/n} \frac{\td t}{t+z},
\end{align*}
from which and the facts that
$$
A_n([a])=A_n(a)\quad \text{and}\quad \prod_{k=1}^n|a_{[k]}-t|=\prod_{k=1}^n|a_k-t|,
$$
the integral representation~\eqref{AG-New-eq1} follows.
\par
Differentiating with respect to $z$ on both sides of~\eqref{AG-New-eq1} yields
\begin{equation*}
G_n'(a+z)=1+\frac1\pi\sum_{\ell=1}^{n-1}\sin\frac{\ell\pi}n \int_{a_{[\ell]}}^{a_{[\ell+1]}} \Biggl[\prod_{k=1}^n|a_k-t|\Biggr]^{1/n} \frac{\td t}{(t+z)^2},
\end{equation*}
which implies that $G_n'(a+t)$ is completely monotonic, and so the geometric mean $G_n(a+t)$ is a Bernstein function. Theorem~\ref{AG-New-thm1} is proved.
\end{proof}

\section{A new proof of the AG inequality}

As an application of the integral representation~\eqref{AG-New-eq1} in Theorem~\ref{AG-New-thm1}, we can easily deduce the AG inequality~\eqref{AG-ineq} as follows.
\par
Taking $z=0$ in the integral representation~\eqref{AG-New-eq1} yields
\begin{equation}\label{AG-ineq-int}
G_n(a)=A_n(a)-\frac1\pi\sum_{\ell=1}^{n-1}\sin\frac{\ell\pi}n \int_{a_{[\ell]}}^{a_{[\ell+1]}} \Biggl[\prod_{k=1}^n|a_k-t|\Biggr]^{1/n} \frac{\td t}{t}\le A_n(a),
\end{equation}
from which the inequality~\eqref{AG-ineq} follows.
\par
From~\eqref{AG-ineq-int}, it is also immediate that the equality in~\eqref{AG-ineq} is valid if and only if $a_{[1]}=a_{[2]}=\dotsm=a_{[n]}$, that is, $a_1=a_2=\dotsm=a_n$. The proof of the AG inequality~\eqref{AG-ineq} is complete.

\end{document}